\newtheorem{dfn}{Definition}
\newtheorem{lem}{Lemma}
\newtheorem{obs}{Observation}
\newtheorem{thm}{Theorem}
\newtheorem{prp}{Property}
\newtheorem{pro}{Proposition}
\newtheorem{cor}{Corollary}
\journal{}
\begin{document}

\begin{frontmatter}

\title{Equidistant dimension of Johnson and Kneser graphs}

\author{Jozef Kratica \fnref{mi}}
\ead{jkratica@mi.sanu.ac.rs}
\author{Mirjana \v{C}angalovi\'c \fnref{fon}}
\ead{mirjana.cangalovic@alumni.fon.bg.ac.rs}
\author{Vera Kova\v{c}evi\'c-Vuj\v{c}i\'c \fnref{fon}}
\ead{vera.vujcic@alumni.fon.bg.ac.rs}

 \address[mi]{Mathematical Institute, Serbian Academy of Sciences and Arts, Kneza Mihaila 36/III, 11 000 Belgrade, Serbia}
 \address[fon]{Faculty of Organizational Sciences, University of  Belgrade, Jove Ili\'ca 154, 11000 Belgrade, Serbia}

\begin{abstract}In this paper the recently introduced concept of equidistant dimension
$eqdim(G)$ of graph $G$ is considered. Useful property of distance-equalizer set of
arbitrary graph $G$ has been established.
For Johnson graphs $J_{n,2}$ and Kneser graphs $K_{n,2}$ exact values for
$eqdim(J_{n,2})$ and $eqdim(K_{n,2})$ have been derived, while for Johnson
graphs $J_{n,3}$ it is proved that $eqdim(J_{n,3}) \le n-2$. Finally,
the exact value of $eqdim(J_{2k,k})$ for odd $k$ has been presented.
\end{abstract}

\begin{keyword}
Distance-equalizer set, Equidistant dimension, Johnson graphs, Kneser graphs.
\MSC[2010]{05C12,05C69}
\end{keyword}

\end{frontmatter}


\section{Introduction and previous work}

The set of vertices $S$ is a resolving (or locating) set of graph $G$ if
all other vertices are uniquely determined by their distances to the
vertices in $S$. The metric dimension of $G$ is the minimum 
cardinality of resolving sets of $G$. Resolving sets for graphs
and the metric dimension were introduced by Slater \cite{metd1}
and, independently, by Harary and Melter \cite{metd2}. 
The concept of doubly resolving set for $G$ has been introduced 
by Caceres et. al \cite{dmetd}.

However, recently, several authors have turned
their attention in the opposite direction from resolvability, thus trying to study
anonymization problems in networks instead of location aspects.
A subset of vertices $A$ is a 2-antiresolving
set for $G$ if, for every vertex $v \notin A$, there exists another different vertex $w \notin A$ such that $v$
and $w$ have the same vector of distances to the vertices of $A$ \cite{adim1}. The 2-metric antidimension
of a graph is the minimum cardinality of 2-antiresolving sets for $G$.
More about this topic can be found in \cite{adim2,adim3}.

In the same spirit, paper \cite{eqdim1} introduces new graph concepts that can also be applied to
anonymization problems in networks: distance-equalizer set and equidistant dimension.
The authors study the equidistant dimension of several classes of graphs,
proving that in the case of paths and cycles this invariant is related
to a classical problem of number theory. They also show that distance-eqalizer sets
can be used for constructing doubly resolving sets, and obtain a new bound
for the minimum cardinality of doubly resolving sets of $G$ in terms of the
metric dimension and the equidistant dimension of $G$. In \cite {eqdimnp} it is proved that the equidistant dimension problem 
is NP-hard in a general case, and eqidistant dimension of lexicographic product of graphs is considered. 

\subsection{Definitions and basic properties}

All graphs considered in this paper are connected, undirected, simple, and finite.
The vertex set and the edge set of a graph $G$ are denoted by $V(G)$ and $E(G)$, respectively.
The order of $G$ is $|V(G)|$. For any vertex $v \in V(G)$, its open neighborhood
is the set $N(v) = \{w \in V(G) | vw \in E(G)\}$ and its closed neighborhood is
$N[v] = N(v) \bigcup \{v\}$. 

The degree of a vertex $v$, denoted by $deg(v)$, is defined as the cardinality of $N(v)$.
If $deg(v) = 1$, then we say that $v$ is a leaf, in which case the only vertex adjacent to
$v$ is called its support vertex.
When $deg(v) = |V(G)|-1$, we say that $v$ is universal. 
The maximum degree of $G$ is
$\Delta(G) = max \{deg(v) | v \in V(G)\}$ and its minimum degree is
$\delta(G) = min \{deg(v) | v \in V(G)\}$.
If all vertices of $G$ have the same degree $r$, i.e. $\Delta(G)=\delta(G)=r$, we say
that graph $G$ is $r$-regular.
The distance between two vertices $v,w \in V(G)$, denoted by $d(v,w)$,
is the leghth of a shortest $u-v$ path,
and the diameter of $G$ is $Diam(G) = max \{d(v,w) | v,w \in V(G)\}$. 
The set of vertices on equal distances from adjacent vertices $u$ and $v$ is denoted in the literature by $_u{W_v}$ (\cite{bal09}).
In general, the same notation can be used also for non-adjacent vertices, i.e. $_u{W_v} = \{ x \in V(G) | d(u,x) = d(v,x)\}$. 

Let $n$ and $k$ be positive integers ($n > k$) and $[n] = \{1,2,...,n\}$.
Then $k$-subsets are subsets of $[n]$ which have cardinality equal to $k$.
The Johnson graph $J_{n,k}$ is an undirected graph defined on all $k$-subsets of set $[n]$ as vertices,
where two $k$-subsets are adjacent if their intersection has cardinality equal to $k-1$.
Mathematically, $V(J_{n,k}) = \{ A | A \subset [n], |A|=k\}$ and
$E(J_{n,k}) = \{ AB | A,B \subset [n], |A|=|B|=k, |A \bigcap B|=k-1\}$.

It is easy to see that $J_{n,k}$ and $J_{n,n-k}$ are isomorphic, so
we shall only consider Johnson graphs with $n \ge 2k$.
The distance between two vertices $A$ and $B$ in $J_{n,k}$
can be computed by Property \ref{distj}.

\begin{prp} \label{distj} For $A,B \in V(J_{n,k})$ it holds 
$d(A,B) = |A \setminus B| = |B \setminus A| = k-|A \bigcap B|$.
\end{prp}

In a special case when $n=2k$ distance between $\overline{A} = V(J_{n,k} \setminus A$ and $B$ can be
computed by Property \ref{comp}.

\begin{prp} \label{comp} For $A,B \in V(J_{2k,k})$ it holds $d(\overline{A},B) = k - d(A,B) = |A \bigcap B|$
\end{prp}
 
Considering  Property \ref{distj}, it is easy to see that Johnson graph $J_{n,k}$ is a $k (n-k)$-regular graph 
of diameter $k$. 

The Kneser graph $K_{n,k}$ is an undirected graph also defined on all $k$-subsets of set $[n]$ as vertices,
where two $k$-subsets are adjacent if their intersection is empty set.
Mathematically, $V(J_{n,k}) = \{ A | A \subset [n], |A|=k\}$ and
$E(J_{n,k}) = \{ AB | A,B \subset [n], |A|=|B|=k, A \bigcap B= \emptyset\}$.

Kneser graph is connected only if $n > k$, it is also $\binom{n-k}{k}$-regular graph.
Specially, for $k=2$, Kneser graph $K_{n,2}$ is the complement of the corresponding 
Johnson graph $J_{n,2}$, and both graphs have diameter 2. Hence, if $d_{K_{n,2}}(A,B)=1$
then $d_{J_{n,2}}(A,B)=2$, and vice versa. Therefore, for $A \ne B$ it holds 
$d_{K_{n,2}}(A,B) = 3 - d_{J_{n,2}}(A,B)$.

\begin{dfn} \label{eqd1} \mbox{\rm(\cite{eqdim1})} Let $u, v, x \in V(G)$. We say that $x$ is equidistant from $u$ and $v$
if $d(u,x) = d(v,x)$. 
\end{dfn}

\begin{dfn} \label{eqd2} \mbox{\rm(\cite{eqdim1})} A subset $S$ of vertices is called a distance-equalizer set for
$G$ if for every two distinct vertices $u, v \in V(G) \setminus S$ there exists a vertex $x \in S$
equidistant from $u$ and $v$. 
\end{dfn}

\begin{dfn} \label{eqd3} \mbox{\rm(\cite{eqdim1})} The equidistant dimension of $G$, denoted by $eqdim(G)$,
is the minimum cardinality of a distance-equalizer set of $G$
\end{dfn}

\begin{prp} \label{eqd4} \mbox{\rm(\cite{eqdim1})} If $v$ is a
universal vertex of a graph $G$, then $S = \{v\}$ is a minimum distance-equalizer set of
$G$, and so $eqdim(G) = 1$.
\end{prp}

\begin{lem} \label{sup1} \mbox{\rm(\cite{eqdim1})} Let $G$ be a graph. If $S$ is a distance-equalizer set of $G$
 and $v$ is a support vertex of $G$, then $S$ contains $v$ or all leaves adjacent to $v$.
\end{lem}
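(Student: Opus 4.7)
The plan is to argue by contradiction. Suppose $S$ is a distance-equalizer set, $v$ is a support vertex, and both $v \notin S$ and some leaf $x$ adjacent to $v$ satisfies $x \notin S$. Then $v$ and $x$ are two distinct vertices in $V(G) \setminus S$, so by Definition \ref{eqd2} there exists $w \in S$ with $d(v,w) = d(x,w)$.

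The key observation is that since $w \in S$ while neither $v$ nor $x$ lies in $S$, we have $w \neq v$ and $w \neq x$. Because $x$ is a leaf, its unique neighbor in $G$ is $v$, so every walk from $x$ to any vertex other than $x$ must begin with the edge $xv$. In particular, any shortest $x$--$w$ path passes through $v$, giving $d(x,w) = 1 + d(v,w)$. This contradicts the equality $d(x,w) = d(v,w)$.

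Hence the assumption fails: if $v \notin S$, then every leaf adjacent to $v$ must belong to $S$, which is exactly the statement of the lemma.

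There is no real obstacle here; the only thing to be careful about is ensuring that $w$ differs from both $v$ and $x$ before invoking the "leaf forces detour through the support" argument, and this is automatic from $w \in S$ together with $v, x \notin S$.
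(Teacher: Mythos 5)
Your proof is correct and rests on the same key fact the paper uses: no vertex $w \neq x$ can be equidistant from a leaf $x$ and its support $v$, because $d(x,w) = d(v,w) + 1$ for every such $w$. The paper packages this as ${}_{x}W_{v} = \emptyset$ and invokes Corollary \ref{wuv2}, whereas you unfold the same argument as a direct contradiction from Definition \ref{eqd2}; the two are essentially identical.
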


Consequently: 

\begin{cor} \label{sup2} \mbox{\rm(\cite{eqdim1})} $eqdim(G) \ge | \{v \in V(G) | v$ is a support vertex $\} |$.
\end{cor}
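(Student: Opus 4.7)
The plan is to deduce the bound directly from Lemma \ref{sup1} by exhibiting an injection from the set of support vertices into any distance-equalizer set. Let $V_s = \{v \in V(G) \mid v \text{ is a support vertex}\}$ and, for each $v \in V_s$, let $L(v)$ denote the (nonempty) set of leaves adjacent to $v$.

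First I would fix an arbitrary distance-equalizer set $S$ of $G$. Applying Lemma \ref{sup1} to every $v \in V_s$ gives that either $v \in S$ or $L(v) \subseteq S$; in either case the intersection $S \cap (\{v\} \cup L(v))$ is nonempty. The next step is to pick, for each $v \in V_s$, a representative $f(v) \in S$: take $f(v) = v$ when $v \in S$, and otherwise let $f(v)$ be any leaf in $L(v) \subseteq S$ (well-defined since $L(v) \neq \emptyset$).

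The main part of the argument is verifying that $f\colon V_s \to S$ is injective. I would check this by ruling out the two possible collisions between distinct $u,v \in V_s$. A leaf has a unique neighbour, so $L(u) \cap L(v) = \emptyset$ when $u \neq v$, which excludes the leaf--leaf collision. A leaf--vertex collision would require a support vertex $u$ to itself be a leaf adjacent to another support vertex $v$; but then $u$ would have degree one with its only neighbour $v$ also being a leaf, forcing the component to be $K_2$. Since $G$ is assumed connected and the statement is the interesting one for $|V(G)| \geq 3$, this degenerate case can be dismissed (or noted as an exception).

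Given injectivity, one concludes $|V_s| = |f(V_s)| \leq |S|$, and since $S$ was an arbitrary distance-equalizer set, $eqdim(G) \geq |V_s|$. I expect the only subtle point to be the $K_2$ caveat above; the rest is a direct bookkeeping consequence of Lemma \ref{sup1}.
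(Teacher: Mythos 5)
Your argument is correct and is exactly the deduction the paper leaves implicit behind the word ``Consequently'': an injection from the set of support vertices into an arbitrary distance-equalizer set, obtained by assigning to each support vertex either itself or one of its leaves, as guaranteed by Lemma \ref{sup1}, and using the uniqueness of a leaf's neighbour to rule out collisions. Your $K_2$ caveat is a genuine and well-spotted boundary case --- for $G=P_2$ both vertices are support vertices of each other yet $eqdim(P_2)=1$ by Theorem \ref{ext2}, so the corollary as literally stated requires $|V(G)|\ge 3$.
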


\begin{thm} \label{ext1} \mbox{\rm(\cite{eqdim1})} For every graph $G$ of order $n \ge 2$, the following statements hold.
\begin{itemize}
\item $eqdim(G) = 1$ if and only if $\Delta(G) = n - 1$;
\item $eqdim(G) = 2$ if and only if $\Delta(G) = n - 2$.
\end{itemize}
\end{thm}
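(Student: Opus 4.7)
For the first equivalence, the direction $(\Leftarrow)$ is Property~\ref{eqd4}. For $(\Rightarrow)$, suppose $\{v\}$ is a distance-equalizer set. Definition~\ref{eqd2} forces $d(v,x) = d(v,y)$ for every pair of distinct $x, y \in V(G) \setminus \{v\}$, so all vertices other than $v$ lie at a common distance $r$ from $v$. Since $G$ is connected and $n \ge 2$, $v$ has at least one neighbour, so $r = 1$; thus $v$ is universal and $\Delta(G) = n-1$.

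For the second equivalence, $(\Leftarrow)$ is constructive: let $v$ have degree $n-2$, let $u$ be its unique non-neighbour distinct from $v$, and set $S = \{u,v\}$. Every $x \in V(G) \setminus S$ is adjacent to $v$, so $v$ is equidistant from each pair of such vertices; hence $eqdim(G) \le 2$. The first equivalence rules out $eqdim(G) = 1$ because $\Delta(G) \ne n-1$, so $eqdim(G) = 2$.

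The main obstacle is $(\Rightarrow)$ of the second equivalence. Assume $eqdim(G) = 2$, fix a distance-equalizer set $S = \{u,v\}$, and put $T = V(G) \setminus S$. The plan is first to show that one of $d(u,\cdot), d(v,\cdot)$ is constant on $T$. If $d(u,\cdot)$ is already constant, interchange $u$ and $v$. Otherwise pick $x_1, x_2 \in T$ with $d(u,x_1) \ne d(u,x_2)$; the equalizer condition applied to the pair $\{x_1, x_2\}$ forces $d(v,x_1) = d(v,x_2)$, a value I call $b$. For any other $y \in T$, $d(u,y)$ must differ from at least one of $d(u,x_1), d(u,x_2)$, so applying the equalizer condition again yields $d(v,y) = b$. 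Hence $d(v,x) = b$ for every $x \in T$.

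A case analysis on $b$ then yields $\Delta(G) \ge n-2$. If $b = 1$, then $v$ is adjacent to every vertex of $T$, giving $\deg(v) \ge n-2$. If $b \ge 2$, then $v$ has no neighbour in $T$; connectedness forces $uv \in E(G)$, and every shortest path from $v$ to a vertex of $T$ begins with the edge $vu$, so $d(u,x) = b-1$ for each $x \in T$. The subcase $b = 2$ makes $u$ adjacent to every vertex of $T$, whence $\deg(u) \ge n-2$; the subcase $b \ge 3$ leaves both $u$ and $v$ with degree $1$ and $T$ disconnected from $S$, contradicting connectedness. Combining $\Delta(G) \ge n-2$ with the bound $\Delta(G) \le n-2$ from the first equivalence (valid because $eqdim(G) \ne 1$) gives $\Delta(G) = n-2$. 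The genuinely delicate step is the constancy reduction for $d(v,\cdot)$; the degree bookkeeping that follows is routine.
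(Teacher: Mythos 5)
This statement is quoted from \cite{eqdim1} and the paper gives no proof of its own, so there is nothing to compare against; your argument has to stand on its own, and it does. Both directions of the first equivalence and the constructive direction of the second are correct, and the key idea for the hard direction --- showing that for a two-element equalizer set $\{u,v\}$ at least one of $d(u,\cdot)$, $d(v,\cdot)$ must be constant on $T=V(G)\setminus\{u,v\}$, by playing two vertices with distinct $u$-distances against any third vertex --- is exactly the right lever, and your case analysis on the constant value $b$ is exhaustive. Two cosmetic remarks: in the subcase $b=2$ you actually get $\deg(u)=n-1$, i.e.\ a universal vertex and hence $eqdim(G)=1$, so that subcase is contradictory rather than merely giving $\Delta(G)\ge n-2$; this does not hurt the proof, since the final comparison with $\Delta(G)\le n-2$ kills it anyway, but it is cleaner to say so. Also, the forward direction of the second equivalence is vacuous for $n\le 3$ (every connected graph of order $2$ or $3$ has a universal vertex), which is worth a sentence since your choice of $x_1,x_2\in T$ tacitly assumes $|T|\ge 2$; when $|T|\le 1$ the constancy of $d(u,\cdot)$ on $T$ holds trivially, so no gap arises.
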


\begin{cor} \label{ext1a} \mbox{\rm(\cite{eqdim1})}
If $G$ is a graph of order $n$ with $\Delta(G) < n - 2$ then $eqdim(G) \ge 3$.
\end{cor}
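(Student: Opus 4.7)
The plan is to derive this corollary as an immediate consequence of Theorem \ref{ext1}, which gives an ``if and only if'' characterization of both $eqdim(G)=1$ and $eqdim(G)=2$ in terms of $\Delta(G)$. So the strategy is simply to apply the contrapositive of both equivalences.

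More concretely, I would first observe that the hypothesis $\Delta(G) < n-2$ implies simultaneously $\Delta(G) \neq n-1$ and $\Delta(G) \neq n-2$. By the contrapositive of the first item of Theorem \ref{ext1}, this gives $eqdim(G) \neq 1$, and by the contrapositive of the second item, $eqdim(G) \neq 2$. Since $eqdim(G)$ is a positive integer (the whole vertex set is vacuously a distance-equalizer set, so the invariant is well-defined, and it is clearly at least $1$ for any graph of order $\geq 2$), the only remaining possibility is $eqdim(G) \geq 3$.

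There is essentially no obstacle here; the corollary is a one-line consequence of Theorem \ref{ext1} by logical negation. The only point worth flagging for the reader is why the hypothesis forces $n$ to be at least $4$: since $\Delta(G) \geq 1$ for any connected graph of order $\geq 2$, the inequality $\Delta(G) < n-2$ requires $n \geq 4$, so the statement is non-vacuous precisely in the regime where Theorem \ref{ext1} leaves room for $eqdim(G)$ to exceed $2$.
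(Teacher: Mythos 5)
Your proof is correct and is exactly the intended derivation: the paper states this as an unproved corollary of Theorem \ref{ext1}, and taking the contrapositive of both equivalences to rule out $eqdim(G)=1$ and $eqdim(G)=2$ is the only step needed. The closing remark about $n\ge 4$ is a harmless extra observation.
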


\begin{thm} \label{ext2} \mbox{\rm(\cite{eqdim1})} For every graph $G$ of order $n$, the following statements hold.
\begin{itemize}
\item If $n \ge 2$, then $eqdim(G) = n - 1$ if and only if $G$ is a path of order 2;
\item If $n \ge 3$, then $eqdim(G) = n - 2$ if and only if $G \in \{P_3, P_4, P_5, P_6,C_3,C_4,C_5\}$.
\end{itemize}
\end{thm}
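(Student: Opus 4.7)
The plan is to prove each bullet separately, using an explicit construction for the ``easy'' cases and a degree plus structural argument for the converses. For the first bullet, the ``if'' direction is immediate: $P_2$ has a universal vertex, so $eqdim(P_2)=1=n-1$ by Proposition~\ref{eqd4}. For the converse, suppose $n\ge 3$; since $G$ is connected some vertex $w$ has $\deg(w)\ge 2$, and picking two neighbors $x,y$ of $w$ makes $S=V(G)\setminus\{x,y\}$ a distance-equalizer set (with $w\in S$ equidistant from $x$ and $y$) of size $n-2<n-1$.

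For the second bullet, the forward direction is a case-by-case check of the seven listed graphs: $P_3$ and $C_3$ have a universal vertex, so $eqdim=1=n-2$; $P_4$ and $C_4$ satisfy $\Delta=n-2$, so Theorem~\ref{ext1} gives $eqdim=2=n-2$; for $P_5,P_6,C_5$ one exhibits an explicit $(n-2)$-set (for example $\{v_2,v_3,v_4\}$ in $P_5$, $\{v_1,v_3,v_4,v_5\}$ in $P_6$, $\{v_1,v_2,v_3\}$ in $C_5$) and rules out smaller ones using the structural observations below. For the converse, if some vertex $w$ has $\deg(w)\ge 3$, then picking three neighbors $x,y,z$ of $w$ makes $S=V(G)\setminus\{x,y,z\}$ a distance-equalizer set of size $n-3$ (with $w$ equidistant from each pair in $\{x,y,z\}$), contradicting $eqdim(G)=n-2$. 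Hence $\Delta(G)\le 2$ and $G\in\{P_n,C_n\}$.

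It then remains to prove $eqdim(P_n)\le n-3$ for all $n\ge 7$ and $eqdim(C_n)\le n-3$ for all $n\ge 6$. For paths, the key observation is that $v_k\in P_n$ is equidistant from $v_i,v_j$ if and only if $i+j$ is even and $k=(i+j)/2$; consequently, in any distance-equalizer set $S\subseteq V(P_n)$, the complement $V(P_n)\setminus S$ must lie in a single parity class and every pair from the complement must have its (unique, integer) midpoint in $S$. Taking $V(P_n)\setminus S=\{v_1,v_3,v_n\}$ for odd $n\ge 7$ (respectively $\{v_2,v_4,v_n\}$ for even $n\ge 8$) satisfies both constraints and gives $|S|=n-3$. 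For cycles, the analogous classification from $d(v_i,v_k)=\min(|i-k|,n-|i-k|)$ produces explicit $(n-3)$-sets after a parity-on-$n$ split; one can take $V(C_n)\setminus S=\{v_2,v_4,v_6\}$ for even $n\ge 6$ and $V(C_n)\setminus S=\{v_1,v_2,v_4\}$ for odd $n\ge 7$, and check directly that each of the three required equidistant witnesses lies in $S$.

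The main obstacle is organising the path/cycle verification cleanly: the equidistance rules genuinely differ in $P_n$ and $C_n$, and in $C_n$ a pair can admit both an \emph{inside} and an \emph{outside} equidistant witness whose existence depends on the parity of $n$ and the arc-length between the pair. I would therefore isolate the equidistance characterisations in $P_n$ and in $C_n$ as preliminary lemmas before the main case analysis, after which the bounds $eqdim(P_n)\le n-3$ for $n\ge 7$ and $eqdim(C_n)\le n-3$ for $n\ge 6$ reduce to short verifications, and the lower bounds $eqdim(G)\ge n-2$ for the seven listed graphs follow from the same characterisations by noting that every candidate $(n-3)$-set is excluded either by mixed parity or by a missing midpoint.
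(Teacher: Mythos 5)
This theorem is quoted from \cite{eqdim1}; the paper you were given states it without proof, so there is no in-paper argument to compare yours against, and your proposal has to be judged on its own. On that basis it is correct and complete in outline. The reduction is right: a vertex $w$ of degree at least $2$ (resp.\ at least $3$) makes the complement of its neighbours' pair (resp.\ triple) a distance-equalizer set of size $n-2$ (resp.\ $n-3$), which settles the first bullet for $n\ge 3$ and forces $\Delta(G)\le 2$, hence $G\in\{P_n,C_n\}$, in the second. Your midpoint characterisation for $P_n$ and the inside/outside-arc description for $C_n$ are correct, and I checked your explicit complements: $\{v_1,v_3,v_n\}$ (odd $n\ge 7$) and $\{v_2,v_4,v_n\}$ (even $n\ge 8$) for paths, and $\{v_2,v_4,v_6\}$ (even $n\ge 6$) and $\{v_1,v_2,v_4\}$ (odd $n\ge 7$) for cycles all have their required equidistant witnesses inside $S$. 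For the seven exceptional graphs, the lower bounds all follow from Proposition~\ref{eqd4}, Theorem~\ref{ext1} or Corollary~\ref{ext1a} except $eqdim(P_6)\ge 4$, which your parity argument does dispose of: a $3$-element complement must be $\{v_1,v_3,v_5\}$ or $\{v_2,v_4,v_6\}$, and the needed midpoint ($v_3$, resp.\ $v_4$) then lies outside $S$. The only caveat is that the writeup is a plan rather than a proof --- the ``short verifications'' you defer are genuinely the bulk of the work --- but every step you sketch goes through.
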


\begin{cor} \label{ext2a} \mbox{\rm(\cite{eqdim1})}
If $G$ is a graph of order $n \ge 7$, then $1 \le eqdim(G) \le n-3$.
\end{cor}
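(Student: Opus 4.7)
The plan is to deduce this corollary directly from Theorem \ref{ext2}, together with the trivial bounds on $eqdim(G)$.

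For the lower bound, I would argue that $eqdim(G) \ge 1$ holds for any graph with at least two vertices: if $S = \emptyset$, then $V(G) \setminus S = V(G)$ contains two distinct vertices $x,y$, but the condition in Definition \ref{eqd2} requires a witness $w \in S$, which is impossible. So the empty set is never a distance-equalizer set when $n \ge 2$, and in particular when $n \ge 7$.

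For the upper bound, I would first note the trivial fact that $eqdim(G) \le n-1$ for every graph of order $n \ge 2$: the set $S = V(G) \setminus \{v\}$, for any $v \in V(G)$, is always a distance-equalizer set since the requirement on pairs in $V(G) \setminus S$ is vacuous (there are no two distinct vertices outside $S$). Then I would successively exclude the top two values using Theorem \ref{ext2}. The first bullet of that theorem says $eqdim(G) = n-1$ forces $G = P_2$, which has order $2 < 7$; so $eqdim(G) \le n-2$. The second bullet says $eqdim(G) = n-2$ forces $G$ to be one of $P_3, P_4, P_5, P_6, C_3, C_4, C_5$, all of which have order at most $6 < 7$; so $eqdim(G) \le n-3$.

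There is essentially no obstacle here, since the corollary is an immediate consequence of Theorem \ref{ext2} once one observes the universal upper bound $eqdim(G) \le n-1$. The only verification needed is that each of the seven exceptional graphs $P_3, P_4, P_5, P_6, C_3, C_4, C_5$ has order strictly less than $7$, which is immediate by inspection.
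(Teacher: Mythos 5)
Your proof is correct and follows the route the paper intends: Corollary \ref{ext2a} is stated as an immediate consequence of Theorem \ref{ext2} (the paper cites it from \cite{eqdim1} without spelling out the argument), and your derivation---establishing the trivial bounds $1 \le eqdim(G) \le n-1$ and then excluding the values $n-1$ and $n-2$ because the characterizing graphs $P_2$ and $P_3,P_4,P_5,P_6,C_3,C_4,C_5$ all have order at most $6$---is exactly the intended deduction.
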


\begin{pro} \label{john1} \mbox{\rm(\cite{eqdim1})} For any positive integer $k$, it holds that
$eqdim(J_{n,k}) \le n$ whenever $n \in \{2k - 1, 2k + 1\}$ or $n > 2k^2$.
\end{pro}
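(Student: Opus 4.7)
The plan is to exhibit, for each case, an explicit distance-equalizer set $S$ of cardinality $n$. The foundation is a reformulation of the equidistance condition: by Property~\ref{distj}, $d(A,B) = k - |A \cap B|$, so a vertex $w \in V(J_{n,k})$ is equidistant from distinct $k$-sets $x, y$ if and only if $|w \cap x| = |w \cap y|$, and setting $p = x \setminus y$, $q = y \setminus x$ this becomes $|w \cap p| = |w \cap q|$. The pair $(p, q)$ ranges over disjoint nonempty equal-size subsets of $[n]$ with $|p| \le \min(k, n-k)$, the upper bound coming from the requirement that $|x \cap y| = k - |p|$ be realizable inside $[n] \setminus (p \cup q)$. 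It therefore suffices to produce a family $S = \{B_0, \ldots, B_{n-1}\}$ of $n$ $k$-subsets such that for every admissible pair $(p,q)$ some $B_i \in S$ satisfies $|B_i \cap p| = |B_i \cap q|$.

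My candidate is the cyclic family $B_i = \{i+1, i+2, \ldots, i+k\}$ with indices reduced modulo $n$; since the generator $\{1,\ldots,k\}$ has trivial rotational stabilizer in $\mathbb{Z}/n\mathbb{Z}$, this yields exactly $n$ distinct $k$-sets. For the regime $n > 2k^2$ the verification is immediate: every element of $[n]$ belongs to exactly $k$ of the windows $B_i$, so at most $|p \cup q|\cdot k \le 2k \cdot k = 2k^2 < n$ of the windows meet $p \cup q$. Any of the remaining windows is disjoint from $p \cup q$, and hence satisfies $|B_i \cap p| = 0 = |B_i \cap q|$ as required.

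The regimes $n = 2k-1$ and $n = 2k+1$ are more delicate because the above counting bound has no slack. Here I would study the integer-valued function $f(i) = |B_i \cap p| - |B_i \cap q|$ on $\mathbb{Z}/n\mathbb{Z}$. It satisfies $\sum_i f(i) = k(|p|-|q|) = 0$, so $f$ cannot be everywhere positive nor everywhere negative, and one checks $|f(i+1)-f(i)| \le 2$ with equality $\pm 2$ forcing $\{i+1, i+k+1\}$ to meet $p$ and $q$ in a very specific way. A discrete intermediate value argument then shows $f(i_0) = 0$ for some $i_0$ unless $f$ jumps from $+1$ to $-1$ (or vice versa) at every sign change. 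The main obstacle is ruling out this alternating $\pm 1$ pattern; I expect this to follow from a case split on the cyclic positions of the elements of $p \cup q$ relative to the window boundary, combined, for $n = 2k-1$, with the isomorphism $J_{2k-1,k} \cong J_{2k-1,k-1}$, which cuts the permissible range of $|p|$ down to $k-1$ and rules out the highest-slack configurations. These two boundary cases are where the real combinatorial work will lie.
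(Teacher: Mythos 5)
First, note that this paper does not actually prove Proposition~\ref{john1}: it is imported from \cite{eqdim1} without proof, so there is no in-paper argument to compare yours against and your proposal must stand on its own. The reduction via Property~\ref{distj} to pairs $(p,q)=(x\setminus y,\, y\setminus x)$ is correct, and the case $n>2k^2$ is complete: each element of $[n]$ lies in exactly $k$ of the cyclic windows $B_i$, so at most $2k\cdot k<n$ windows meet $p\cup q$, and any window disjoint from $p\cup q$ is the required equidistant vertex (the $n$ windows are pairwise distinct because an arc of length $k<n$ has trivial rotational stabilizer, so $|S|=n$).

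The genuine gap is in the two boundary cases $n=2k\pm 1$, which you explicitly leave open. The discrete intermediate-value argument only reduces the problem to excluding the situation in which every sign change of $f$ is a jump from $+1$ to $-1$, and ``a case split on the cyclic positions of the elements of $p\cup q$'' is not an argument: the number of such configurations grows with $k$, and nothing in your sketch pins them down (the appeal to $J_{2k-1,k}\cong J_{2k-1,k-1}$ is also unnecessary, since two $k$-subsets of $[2k-1]$ already must intersect, giving $|p|\le k-1$ directly). As written, these two cases are unproven. They can, however, be closed within your framework by pairing windows $k$ apart rather than adjacent ones. Write $\chi=\chi_p-\chi_q$, so that $f(i)=\sum_{j=1}^{k}\chi(i+j)$ and $\sum_{j\in[n]}\chi(j)=0$. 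For $n=2k+1$ the windows $B_i$ and $B_{i+k}$ are disjoint and together cover every index except $i$, so $f(i)+f(i+k)=-\chi(i)\in\{-1,0,1\}$; for $n=2k-1$ they cover $[n]$ and overlap exactly in the index $i+1$, so $f(i)+f(i+k)=\chi(i+1)\in\{-1,0,1\}$. Hence, if $f$ never vanished, $f(i)\ge 1$ would force $f(i+k)\le 0$ and therefore $f(i+k)\le -1$, and symmetrically; since $\gcd(k,n)=1$, the orbit $i,\,i+k,\,i+2k,\dots$ is a single closed cycle of odd length $n$ along which the sign of $f$ would have to alternate strictly, which is impossible. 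So your construction and overall strategy are sound, but the submitted proof is incomplete precisely where you acknowledge ``the real combinatorial work will lie.''
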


In \cite{metdj} the exact value of metric dimension for $J_{n,2}$ for $n \ge 6$ and an upper bound 
of metric dimension for $J_{n,k}$ for $k \ge 3$ are given. 

\section{New results}

\subsection{Some properties of distance-equalizer set of graph $G$}

\begin{lem} \label{wuv1} Let $G$ be a graph. Set $S$ is a distance-equalizer set of $G$ if and only if 
$(\forall u,v \in V(G)) \,\, S \, \bigcap \, (\{u,v\} \, \bigcup \, _u{W_v}) \neq \emptyset$.
\end{lem}
\begin{proof} ($\Rightarrow$) Case 1: $u \in S$\\
Since $u \in S$ and $u \in \{u,v\} \, \bigcup \, _u{W_v}$ then $u$ is also member of their intersection,
i.e. $u \in S \, \bigcap \, (\{u,v\} \, \bigcup \, _u{W_v}) \neq \emptyset$. \\
Case 2: $v \in S$\\
Similarly as in Case 1, since $v \in S$ and $v \in \{u,v\} \, \bigcup \, _u{W_v}$ then $v$ is also member of their intersection,
i.e. $v \in S \, \bigcap \, (\{u,v\} \, \bigcup \, _u{W_v}) \neq \emptyset$. \\
Case 3: $u,v \notin S$\\
Since $S$ is a distance-equalizer set of $G$,
and $u,v \in V(G) \setminus S$ then $(\exists x \in S) \, d(u,x) = d(v,x)$.
Therefore, $x \in S$ and $x \in \, _u{W_v}$ so $S \, \bigcap \, _u{W_v}$ is not empty
(since it contains $x$) implying $S \, \bigcap \, (\{u,v\} \, \bigcup \, _u{W_v}) \neq \emptyset$. \\  

($\Leftarrow$) Let $S \subset V(G)$ and 
$(\forall u,v \in V(G)) \,\, S \, \bigcap \, (\{u,v\} \, \bigcup \, _u{W_v}) \neq \emptyset$.
Suppose that $u,v \in V(G) \setminus S$. From $\emptyset \ne S \, \bigcap \, (\{u,v\} \, \bigcup \, _u{W_v}) =$ \\
$ (S \bigcap \, (\{u,v\}) \bigcup \, (S \bigcap \, _u{W_v}) = S \bigcap \, _u{W_v}$. It follows that 
there exists $x \in S$ such that $d(u,x) = d(v,x)$, i.e. $S$ is a distance-equalizer set of $G$.
\end{proof}

\begin{cor} \label{wuv2} Let $G$ be a graph, and $u$ and $v$ any vertices from $V(G)$.
If $S$ is a distance-equalizer set of $G$ and $_u{W_v} = \emptyset$ then $u \in S$ or $v \in S$.
\end{cor}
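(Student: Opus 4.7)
The plan is to derive this corollary as a direct consequence of Lemma \ref{wuv1}. Since the lemma gives us $S \,\bigcap\, (\{u,v\} \,\bigcup\, {_u W_v}) \neq \emptyset$ with no further hypotheses on $u,v$, the corollary should follow by simply substituting the assumption $_u W_v = \emptyset$ into this union.

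First I would invoke Lemma \ref{wuv1} for the given vertices $u,v \in V(G)$ and the distance-equalizer set $S$, obtaining that the intersection $S \,\bigcap\, (\{u,v\} \,\bigcup\, {_u W_v})$ is nonempty. Next, using the hypothesis $_u W_v = \emptyset$, I would note that $\{u,v\} \,\bigcup\, {_u W_v} = \{u,v\}$, so the nonempty intersection becomes $S \,\bigcap\, \{u,v\} \neq \emptyset$. This immediately yields $u \in S$ or $v \in S$, which is the desired conclusion.

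There is essentially no obstacle here: the corollary is a one-line specialization of the preceding lemma, and the argument is purely set-theoretic substitution. The only ``care'' required is to state clearly the reduction $\{u,v\} \,\bigcup\, \emptyset = \{u,v\}$ so that the equivalence $S \,\bigcap\, \{u,v\} \neq \emptyset \Longleftrightarrow (u \in S \vee v \in S)$ is transparent.
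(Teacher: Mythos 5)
Your proposal is correct and matches the paper's intended argument exactly: the corollary is stated as an immediate specialization of Lemma \ref{wuv1}, obtained by substituting $_u{W_v} = \emptyset$ so that the nonempty intersection reduces to $S \bigcap \{u,v\} \neq \emptyset$. Nothing further is needed.
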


It should be noted that Lemma \ref{wuv1} from \cite{eqdim1} is a consequence of 
Corollary \ref{wuv2}. Indeed, if $v$ is a support vertex of $G$ and $u$ is one of 
leaves adjacent to $v$, it is obvious that $(\forall x \in V(G) \setminus \{u\}) d(u,x) = d(v,x) + 1$
and $1 = d(u,v) = d(u,u)+1$,  and, therefore, $_u{W_v} = \emptyset$. 
If $S$ is a distance-equalizer set of $G$, by Corollary \ref{wuv2},
$S$ contains $v$ or all leaves adjacent to $v$. 

\subsection{Equidistant dimension of $J_{n,2}$}

The exact value of $eqdim(J_{n,2})$ for $n \ge 4$ is given by Observation \ref{john2a} and Theorem \ref{john2}.

\begin{obs} \label{john2a} By a total enumeration, it is found that 
\begin{itemize}
\item $eqdim(J_{4,2}) = 2$ with the corresponding distance-equalizer set $S = \{ \{1,2\}, \{3,4\}\}$;
\item $eqdim(J_{5,2}) = 3$ with the corresponding distance-equalizer set $S = \{ \{1,2\}, \{1,3\}, \{2,3\}\}$.
\end{itemize}
\end{obs}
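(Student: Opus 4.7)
The plan is to handle the two graphs separately, obtaining the lower bound in each case from the extremal results already proved in the excerpt (Theorem \ref{ext1} or Corollary \ref{ext1a}), and then verifying that the exhibited $S$ is a distance-equalizer set using Property \ref{distj}.

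For $J_{4,2}$, the graph has order $\binom{4}{2}=6$ and is $2(4-2)=4$-regular, so $\Delta(J_{4,2}) = 4 = 6-2$. Theorem \ref{ext1} immediately gives $eqdim(J_{4,2}) = 2$. To confirm that $S = \{\{1,2\},\{3,4\}\}$ achieves this bound, I would observe that each of the four remaining vertices $\{1,3\},\{1,4\},\{2,3\},\{2,4\}$ meets $\{1,2\}$ in exactly one element and also meets $\{3,4\}$ in exactly one element; by Property \ref{distj} each is at distance $1$ from both members of $S$, so any pair of them is simultaneously equidistant from either vertex of $S$.

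For $J_{5,2}$, the graph has order $\binom{5}{2}=10$ and is $2(5-2)=6$-regular, so $\Delta(J_{5,2})=6<10-2$. Corollary \ref{ext1a} therefore yields $eqdim(J_{5,2})\ge 3$. For the matching upper bound with $S=\{\{1,2\},\{1,3\},\{2,3\}\}$, I would compute the triple $(d(v,\{1,2\}), d(v,\{1,3\}), d(v,\{2,3\}))$ for each of the seven vertices in $V(J_{5,2})\setminus S$. By Property \ref{distj} the $i$-th coordinate equals $1$ exactly when $v$ meets the corresponding vertex of $S$, and $2$ otherwise. Grouping the seven vertices by which element of $\{1,2,3\}$ they contain (with $\{4,5\}$ in its own class, producing the vector $(2,2,2)$), vertices inside a common class share all three coordinates; the cross-class pairs are then checked case by case to agree in at least one coordinate.

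The main obstacle is just the bookkeeping for $J_{5,2}$, where in principle $\binom{7}{2}=21$ pairs must be verified. However, since each distance lies in $\{1,2\}$, the verification collapses to a short case analysis showing that no two of the seven distance vectors differ in all three positions, and the symmetry of $S$ under the permutations of $\{1,2,3\}$ further reduces the number of essentially distinct cases.
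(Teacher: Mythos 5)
Your proposal is correct, and it is genuinely different from what the paper does: the paper offers no argument at all for this observation, simply reporting the two values as the output of a total enumeration by computer. You replace that computation with a short analytic proof. Your lower bounds are exactly right: $J_{4,2}$ has order $6$ and is $4$-regular, so Theorem \ref{ext1} pins down $eqdim(J_{4,2})=2$ outright, and $J_{5,2}$ has order $10$ with $\Delta=6<8$, so Corollary \ref{ext1a} gives $eqdim(J_{5,2})\ge 3$. Your upper-bound verifications also check out: for $J_{4,2}$ all four vertices outside $S=\{\{1,2\},\{3,4\}\}$ have distance vector $(1,1)$, and for $J_{5,2}$ the seven vertices outside $S=\{\{1,2\},\{1,3\},\{2,3\}\}$ fall into the four classes with vectors $(1,1,2)$, $(1,2,1)$, $(2,1,1)$, $(2,2,2)$, no two of which disagree in every coordinate. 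In effect you have extended the case analysis the paper uses in Theorem \ref{john2} (stated there only for $n\ge 6$) down to $n=5$, and invoked the extremal characterization for $n=4$; what this buys is a human-checkable proof in place of a brute-force search over all subsets, and it makes visible that the paper's restriction of Theorem \ref{john2} to $n\ge 6$ is needed only for the lower bound (which fails at $n=4$), not for the upper-bound construction.
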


\begin{thm} \label{john2} For $n \ge 6$ it holds  $eqdim(J_{n,2}) = 3$.
\end{thm}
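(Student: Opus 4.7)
My plan is to prove the two inequalities $eqdim(J_{n,2}) \geq 3$ and $eqdim(J_{n,2}) \leq 3$ separately for $n \ge 6$.

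For the lower bound, I will apply Corollary \ref{ext1a}: it suffices to show that $\Delta(J_{n,2}) < |V(J_{n,2})| - 2$ whenever $n \ge 6$. Since $J_{n,2}$ is $2(n-2)$-regular on $\binom{n}{2}$ vertices, this reduces to the elementary inequality $2(n-2) < \binom{n}{2} - 2$, which rearranges to $(n-1)(n-4) > 0$ and holds for every $n \ge 6$ (in fact for $n \ge 5$).

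For the upper bound I propose the explicit set $S = \{\{1,2\}, \{1,3\}, \{2,3\}\}$, mimicking the case $n=5$ from Observation \ref{john2a}. To verify it is a distance-equalizer set, I would exploit the fact that $J_{n,2}$ has diameter $2$, so by Property \ref{distj} any vertex $A \ne B$ satisfies $d(A,B) = 2 - |A \cap B| \in \{1,2\}$. For an arbitrary $A \in V(J_{n,2}) \setminus S$, the intersection $A \cap \{1,2,3\}$ must have cardinality $0$ or $1$ (it cannot be $2$, since every $2$-subset of $\{1,2,3\}$ lies in $S$). This partitions $V(J_{n,2}) \setminus S$ into four classes, and the distance vectors from a vertex in each class to the ordered triple $(\{1,2\}, \{1,3\}, \{2,3\})$ are exactly $(2,2,2)$, $(1,1,2)$, $(1,2,1)$, $(2,1,1)$, depending on whether $A \cap \{1,2,3\} = \emptyset, \{1\}, \{2\}$, or $\{3\}$.

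The final step is a small case check: verify that any two of these four vectors agree in at least one coordinate (the all-$2$ vector agrees with each of the others in exactly one coordinate, and any two of the remaining three agree in exactly one coordinate). Consequently, for every pair of distinct $A, B \in V(J_{n,2}) \setminus S$ there is a vertex of $S$ equidistant from $A$ and $B$, so $S$ is a distance-equalizer set of size $3$. I do not foresee any real obstacle; the main content is simply the identification of the correct $3$-element set, and the rest is a short, finite verification independent of $n$.
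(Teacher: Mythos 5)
Your proposal is correct and follows essentially the same route as the paper: the identical lower-bound argument via Corollary \ref{ext1a} (using that $J_{n,2}$ is $2(n-2)$-regular on $\binom{n}{2}$ vertices) and the identical witness set $S=\{\{1,2\},\{1,3\},\{2,3\}\}$ for the upper bound. Your verification by tabulating the four possible distance vectors $(2,2,2)$, $(1,1,2)$, $(1,2,1)$, $(2,1,1)$ and checking pairwise agreement in a coordinate is just a cleaner reorganization of the paper's five-case analysis of pairs of vertices.
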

\begin{proof}Step 1: $eqdim(J_{n,2}) \ge 3$\\
Since $J_{n,k}$ is $k \cdot (n-k)$-regular graph,
so $\Delta(J_{n,k}) = \delta(J_{n,k}) = k \cdot (n-k)$.
For $k=2$ it follows that $\Delta(J_{n,2}) =  2 \cdot (n-2)$.
Since $|J_{n,2}| = \binom{n}{2} = \frac{n \cdot (n-1)}{2}$
it is obvious that for $n \ge 5$ it holds 
$\Delta(J_{n,2}) =  2 \cdot (n-2) < |J_{n,2}|-2 = \frac{n \cdot (n-1)}{2} - 2$,
so by Corollary \ref{ext1a} it follows that $eqdim(J_n) \ge 3$.

Step 2: $eqdim(J_{n,2}) \le 3$\\
Let $S = \{ \{1,2\}, \{1,3\}, \{2,3\}  \}$. We will prove that set $S$
is a distance-equalizer set by checking all pairs of vertices $X$ and $Y$ from $V(J_{n,2}) \setminus S$.

Case 1: $\{1,2,3\} \bigcap X = \emptyset$ and $\{1,2,3\} \bigcap Y = \emptyset$ \\
Let $Z = \{1,2\}$. Then 
$d(X,Z)= 2-|X \bigcap Z| = 2 = 2-|Y \bigcap Z| = d(Y,Z)$.

Case 2: $\{1,2,3\} \bigcap X = \emptyset$ and $\{1,2,3\} \bigcap Y \ne \emptyset$ \\
Since $Y \notin S$ then $|Y \bigcap \{1,2,3\}|  = 1$. Let $Z = \{1,2,3\} \setminus Y$. 
It is obvious that $Z \subset \{1,2,3\}$ and $|Z| = 2$ implying $Z \in S$.
Since $\{1,2,3\} \bigcap X = \emptyset$ and $Y \bigcap Z = \emptyset$ 
then $d(X,Z)= 2-|X \bigcap Z| = 2 = 2-|Y \bigcap Z| = d(Y,Z)$.

Case 3: $\{1,2,3\} \bigcap X \ne \emptyset$ and $\{1,2,3\} \bigcap Y = \emptyset$ \\
This case is analogous as Case 2, only swap sets X and Y.

Case 4: $\{1,2,3\} \bigcap X \ne \emptyset$ and $\{1,2,3\} \bigcap Y \ne \emptyset$ and $X \bigcap Y \bigcap \{1,2,3\} = \emptyset$\\
Let $Z = \{1,2,3\} \bigcap (X \bigcup Y)$.
It is obvious that $Z \subseteq \{1,2,3\}$.
Since $X,Y \notin S$ then $|X \bigcap \{1,2,3\}|  = 1$ and $|Y \bigcap \{1,2,3\}|  = 1$
it holds $|Z|=2$ so  $Z \in S$. 
Therefore, $d(X,Z)= 2-|X \bigcap Z| = 1 = 2-|Y \bigcap Z| = d(Y,Z)$.

Case 5: $\{1,2,3\} \bigcap X \ne \emptyset$ and $\{1,2,3\} \bigcap Y \ne \emptyset$ and $X \bigcap Y \bigcap \{1,2,3\} \ne \emptyset$\\
Since $X,Y \notin S$ it holds $|X \bigcap Y \bigcap \{1,2,3\}|=1$.
Let $Z = \{1,2,3\} \setminus X$. It is obvious that $Z = \{1,2,3\} \setminus Y$ and
$X \bigcap Z = Y \bigcap Z = \emptyset$.
Therefore, $d(X,Z)= 2-|X \bigcap Z| = 2 = 2-|Y \bigcap Z| = d(Y,Z)$.  
 
Since $(\forall X,Y \in V(J_{n,2}) \setminus S) (\exists Z \in S) d(X,Z) = d(Y,Z)$, then
$S$ is a distance-equalizer set for $J_{n,2}$ and thus  $eqdim(J_{n,2}) \le |S| = 3$.
From Step 1 and Step 2 it holds $eqdim(J_{n,2}) = 3$ for all $n \ge 6$.
\end{proof}

\subsection{An upper bound of equidistant dimension of $J_{n,3}$}

The next theorem gives a tight upper bound of $eqdim(J_{n,3})$ for $n \ge 9$.
The remaining cases when $n \in \{6,7,8\}$ are resolved by Theorem \ref{john2kk}
for $n=6$ and Table \ref{eqjk3} for $n=7$ and $n=8$.

\begin{thm} \label{john3} For $n \ge 9$ it holds  $eqdim(J_{n,3}) \le n-2$.
\end{thm}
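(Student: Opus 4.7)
The plan is to exhibit an explicit distance-equalizer set of size $n-2$, namely
$$ S = \{\{1,2,i\} \, | \, 3 \le i \le n\}. $$
By Property \ref{distj}, the condition $d(X,Z) = d(Y,Z)$ for $Z = \{1,2,i\} \in S$ is equivalent to $|X \cap \{1,2,i\}| = |Y \cap \{1,2,i\}|$. Setting $a = |X \cap \{1,2\}|$ and $b = |Y \cap \{1,2\}|$, one notes that $X \notin S$ forces $a \le 1$ (otherwise $X$ would equal $\{1,2,j\}$ for some $j \ge 3$ and hence lie in $S$), and symmetrically $b \le 1$. The task therefore reduces to finding $i \in \{3, \ldots, n\}$ such that $a$ plus the indicator of $i \in X$ equals $b$ plus the indicator of $i \in Y$.

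If $|a - b| = 1$, say $a = 1$ and $b = 0$, the required equality forces $i \in Y \setminus X$. Since $b = 0$ gives $Y \subseteq \{3,\ldots,n\}$ and $|X \cap Y| \le |X \setminus \{1,2\}| = 2 < |Y|$, the set $Y \setminus X$ is a nonempty subset of $\{3, \ldots, n\}$, and any of its elements works. If $a = b$, one instead needs $i \in \{3,\ldots,n\}$ lying outside $X \triangle Y$.

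The only place where the hypothesis $n \ge 9$ is actually needed is the subcase $a = b = 0$: there $X$ and $Y$ are (possibly disjoint) subsets of $\{3,\ldots,n\}$, so $X \triangle Y$ can exhaust $6$ elements of $\{3,\ldots,n\}$, and the count $|\{3,\ldots,n\} \setminus (X \triangle Y)| \ge (n-2) - 6 = n-8$ is positive precisely when $n \ge 9$. The remaining subcase $a = b = 1$ is looser: since each of $X, Y$ contributes at most two elements to $\{3,\ldots,n\}$, one has $|(X \triangle Y) \cap \{3,\ldots,n\}| \le 4$ and hence at least $n - 6 \ge 3$ valid witnesses remain. The argument is essentially bookkeeping; the only point to watch carefully is how $X \triangle Y$ splits between $\{1,2\}$ and $\{3,\ldots,n\}$, and the binding instance $a = b = 0$ is precisely what pins the threshold at $n \ge 9$.
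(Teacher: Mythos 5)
Your proposal is correct and follows essentially the same route as the paper: the same witness set $S = \{\{1,2,i\} \mid 3 \le i \le n\}$, the same case split according to $|X \cap \{1,2\}|$ and $|Y \cap \{1,2\}|$, and the same counting argument in which the subcase $X, Y \subseteq \{3,\ldots,n\}$ (where up to six indices must be avoided) is what forces $n \ge 9$. Your reformulation via $i \notin X \triangle Y$ versus $i \in Y \setminus X$ is just a cleaner bookkeeping of the paper's ``choose $l$ outside $X \cup Y$'' and ``choose $l$ in $Y \setminus X$'' steps (the latter of which the paper states with an apparent sign typo that you implicitly correct).
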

\begin{proof}
Let $S = \{ \{1,2,j\} | 3 \le j \le n\}$. It can be proved that set $S$
is a distance-equalizer set for $J_{n,3}$, i.e. for each two vertices $X$
and $Y$ from $V(J_{n,3}) \setminus S$,
there exists a vertex $Z=\{1,2,l\}$ from $S$, such that $d(X,Z) = d(Y,Z)$.
We will consider four cases:

Case 1: $\{1,2\} \bigcap X = \emptyset$ and $\{1,2\} \bigcap Y = \emptyset$ \\
It is easy to see that $|\{1,2\} \bigcup X \bigcup Y| \le 8$.
As $n \ge 9$, then there exists $l \in \{3,4,...,n\}$ such that $l \notin X \bigcup Y$. 
Now, for vertex $Z=\{1,2,l\}$ from $S$, 
$d(X,Z)= 3-|X \bigcap Z| = 3 = 3-|Y \bigcap Z| = d(Y,Z)$.

Case 2: $\{1,2\} \bigcap X \ne \emptyset$ and $\{1,2\} \bigcap Y \ne \emptyset$ \\
As $X \notin S$ and $Y \notin S$, then $|\{1,2\} \bigcap X| = 1$
and $|\{1,2\} \bigcap Y| = 1$ and, consequently, $|\{1,2\} \bigcup X \bigcup Y| \le 6$.
As $n \ge 9$, then there exists $l \in \{3,4,...,n\}$ such that $l \notin X \bigcup Y$.
Now, for vertex $Z=\{1,2,l\}$ from $S$, 
$d(X,Z)= 3-|X \bigcap Z| = 2 = 3-|Y \bigcap Z| = d(Y,Z)$.

Case 3: $\{1,2\} \bigcap X \ne \emptyset$ and $\{1,2\} \bigcap Y = \emptyset$ \\
As $X \notin S$, then $|\{1,2\} \bigcap X| = 1$ and, consequently,
$(Y \setminus X) \bigcap \{1,2\} = \emptyset$ and $|Y \setminus X| \ge 1$. 
It means that there exists $l \in \{3,4,...,n\}$ such that $l \notin Y \setminus X$.
Now, for vertex $Z=\{1,2,l\}$ from $S$, 
$d(X,Z)= 3-|X \bigcap Z| = 2 = 3-|Y \bigcap Z| = d(Y,Z)$. 

Case 4: $\{1,2\} \bigcap X = \emptyset$ and $\{1,2\} \bigcap Y \ne \emptyset$ \\
This case can be reduced to Case 3.
 
Based on all previous cases, for each pair of vertices from $V(J_{n,3}) \setminus S$
there exists a vertex $Z \in S$ such that $d(X,Z) = d(Y,Z)$.
Therefore, set $S$ is a distance-equalizer set for $J_{n,3}$. 
As $|S| = n-2$, then $eqdim(J_{n,3}) \le |S| = n-2$.
\end{proof}

\subsection{Equidistant dimension of $J_{2k,k}$, for odd $k$}

Since $\binom{2k}{k}$ is even, then it is possible to make a partitition $(P_1,P_2)$ of $V(J_{2k,k})$, 
such that $P_1 \bigcap P_2 = \emptyset$, $P_1 \bigcup P_2 = V(J_{2k,k})$  and $|P_1| = |P_2| = \frac{1}{2} \cdot \binom{2k}{k}$.
In the sequel we will use the following partition:
$P_1 = \{X \in V(J_{2k,k}) \,:\, |X \bigcap \{1,2,...,k\}| > |X \bigcap \{k+1,k+2,...,2k\}|\}$,
and $P_2 = V(J_{2k,k}) \setminus P_1$. It shoud be noted that 
for odd $k$ it holds $|X \bigcap \{1,2,...,k\}| \ne |X \bigcap \{k+1,k+2,...,2k\}|$,
so $P_2 = \{X \in V(J_{2k,k}) \,:\, |X \bigcap \{1,2,...,k\}| < |X \bigcap \{k+1,k+2,...,2k\}|\}$
and, consequently, $|P_1| = |P_2| = \frac{1}{2} \cdot \binom{2k}{k}$. 

\begin{thm} \label{john2kk} For any odd $k \ge 3$ it holds $eqdim(J_{2k,k}) = \frac{1}{2} \cdot \binom{2k}{k}$.
\end{thm}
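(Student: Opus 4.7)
The plan is to prove the lower bound $eqdim(J_{2k,k}) \geq \frac{1}{2}\binom{2k}{k}$ via a counting argument over complementary pairs, and the upper bound by exhibiting $P_1$ itself as a distance-equalizer set of the right size. For the lower bound I would observe that $\,_A{W_{\overline{A}}} = \emptyset$ for every vertex $A$: by Property~\ref{comp}, $d(\overline{A}, w) = k - d(A, w)$ for all $w$, so the equality $d(A,w) = d(\overline{A}, w)$ would force $d(A,w) = k/2$, which is impossible when $k$ is odd. Corollary~\ref{wuv2} then forces any distance-equalizer set $S$ to contain $A$ or $\overline{A}$, and since complementation partitions $V(J_{2k,k})$ into $\frac{1}{2}\binom{2k}{k}$ blocks, $|S| \geq \frac{1}{2}\binom{2k}{k}$ follows at once.

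For the upper bound, $|P_1| = \frac{1}{2}\binom{2k}{k}$ by the remark preceding the theorem, so it is enough to verify that $P_1$ is a distance-equalizer set. I fix distinct $X, Y \in P_2 = V(J_{2k,k}) \setminus P_1$. Because $\overline{X} \in P_1$, I have $Y \neq \overline{X}$, so $m := |X \setminus Y| = |Y \setminus X|$ lies in $\{1, 2, \dots, k-1\}$. I would first produce some $Z \in V(J_{2k,k})$ (not yet insisting on membership in $P_1$) with $d(X,Z) = d(Y,Z)$, by taking $j$ elements from $X \setminus Y$, $j$ elements from $Y \setminus X$, and $k-2j$ elements from the remaining $2k - 2m$ positions of $[2k]$. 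Then the $U$- and $V$-contributions to $|X \cap Z|$ and $|Y \cap Z|$ cancel, giving $|X \cap Z| = |Y \cap Z|$ and hence $d(X,Z) = d(Y,Z)$ by Property~\ref{distj}. Feasibility of the selection requires $\max(0,\, m - (k-1)/2) \leq j \leq (k-1)/2$, and this interval is nonempty precisely because $m \leq k-1$ (using that $k$ is odd so that $k/2$ is not an integer).

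To land the witness inside $P_1$, I would exploit the complementation symmetry $d(X, \overline{Z}) = |X \cap Z|$, which follows from $|X \cap Z| + |X \cap \overline{Z}| = k$. This gives $d(X,Z) = d(Y,Z) \iff d(X, \overline{Z}) = d(Y, \overline{Z})$. Since $k$ is odd, exactly one of $Z, \overline{Z}$ lies in $P_1$, and that one is the required equidistant vertex in $S = P_1$.

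The main obstacle is the symmetric-difference construction of $Z$ and the arithmetic check that the admissible range for $j$ is nonempty. The proof relies on oddness of $k$ in two essential places: it makes $\,_A{W_{\overline{A}}} = \emptyset$ (driving the lower bound), and it forces $P_1, P_2$ to partition $V(J_{2k,k})$ evenly while making the $Z \leftrightarrow \overline{Z}$ trick land the witness in the prescribed class.
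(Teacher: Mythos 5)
Your proposal is correct and follows essentially the same route as the paper: the lower bound via $\,_X W_{\overline{X}} = \emptyset$ for complementary pairs together with Corollary~\ref{wuv2}, and the upper bound by showing $P_1$ is a distance-equalizer set, first building a balanced witness $Z$ with $|X \cap Z| = |Y \cap Z|$ and then replacing $Z$ by $\overline{Z}$ if necessary. The only difference is cosmetic bookkeeping in constructing $Z$ (your parameter $j$ versus the paper's add/remove adjustment of $(Y \cap Z) \cup (\overline{Y} \cap \overline{Z})$), and your feasibility range should technically also record $j \le m$, though a feasible $j$ exists either way.
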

\begin{proof}Step 1: $eqdim(J_{2k,k}) \ge \frac{1}{2} \cdot \binom{2k}{k}$\\
Let us consider $\frac{1}{2} \cdot \binom{2k}{k}$ pairs of vertices $(X,Y)$ from $V(J_{2k,k})$,
such that $X \in P_1$ and $Y = [2k] \setminus X \in P_2$. 
For any vertex $Z \in V(J_{2k,k})$ it holds $|Z \bigcap X| + |Z \bigcap Y| = k$.
Since $k$ is odd, $|Z \bigcap X|$ is odd and $|Z \bigcap Y|$ is even,
or vice versa. Therefore, $|Z \bigcap X| \ne |Z \bigcap Y|$ implying
$d(X,Z) = k-|Z \bigcap X| \ne k - |Z \bigcap Y| = d(Y,Z)$,
so $_{X}W_Y = \emptyset$. According to Corollary \ref{wuv2},
if $S$ is a distance-equalizer set for graph $J_{2k,k}$
then either $X \in S$ or $Y \in S$, for each pair $(X,Y)$.
Since the number of pairs is $\frac{1}{2} \cdot \binom{2k}{k}$,
then $|S| \ge \frac{1}{2} \cdot \binom{2k}{k}$.

Step 2: $eqdim(J_{2k,k}) \le \frac{1}{2} \cdot \binom{2k}{k}$\\
We shall prove that $P_1$ is a distance-equalizer set for $J_{2k,k}$.
For any two vertices $Y$ and $Z$ from $P_2 = V(J_{2k,k}) \setminus P_1$, 
let us construct $X \in P_1$ such that $d(Y,X) = d(Z,X)$.
Since $|Y| = |Z| = k$ it follows that 
$|Y \setminus Z| = |Y| - |Y \bigcap Z| = |Z| - |Y \bigcap Z| = |Z \setminus Y|$.
Additionally, as $Y,Z \in V(J_{2k,k})$ then $|Y \bigcap Z| = |\overline{Y} \bigcap \overline{Z}|$.
Let $U_1 = (Y \bigcap Z) \bigcup (\overline{Y} \bigcap \overline{Z})$. It is easy to see
that $U_1 \bigcap Y = U_1 \bigcap Z$ and $|U_1|$ is even so $k+1-|U_1|$ is 
also even. \\
Case 1. If $|U_1| < k$ let $a \in U_1$ be arbitrary index and $U_2 = U_1 \setminus \{a\}$.
Let $W_1$ and $W_2$ be any subsets of $Y \setminus Z$ and $Z \setminus Y$ of cardinality
$\frac{k+1-|U_1|}{2}$ elements, respectively. Now let $U_3 = U_2 \bigcup W_1 \bigcup W_2$.
It is obvious that $W_1 \subset Y$, $W_1 \bigcap Z = \emptyset$, $W_2 \subset Z$, $W_2 \bigcap Z = \emptyset$.
Moreover, $|W_1| = |W_2|$, and therefore 
$|U_3 \bigcap Y| = |U_2 \bigcap Y| + |W_1 \bigcap Y| = |U_2 \bigcap Y| + |W_1| =$
$|U_2 \bigcap Z| + |W_2| = |U_2 \bigcap Z| + |W_2 \bigcap Z| = |U_3 \bigcap Z|$. \\
Case 2. If $|U_1| > k$ let $U_3$ be any subset of $U_1$ of cardinality $k$. 
It is obvious that $U_3 \subset (Y \bigcap Z) \bigcup (\overline{Y} \bigcap \overline{Z})$
so $|U_3 \bigcap Y| = |U_3 \bigcap Z|$. \\

In both cases $|U_3| = k$ so $U_3 \in V(J_{2k,k})$.
Therefore, in both cases $|U_3 \bigcap Y| =|U_3 \bigcap Z|$ and hence
$d(U_3,Y) = k-|U_3 \bigcap Y| = k-|U_3 \bigcap Z| = d(U_3,Z)$.

Finally, we construct $X$ as follows.
If $U_3 \in P_1$ then $X = U_3$. Otherwise,
if $U_3 \in P_2$ then $X = \overline{U_3} \in P_1$,
and by Property \ref{comp} it holds $d(\overline{U_3},Y) = k-d(U_3,Y) = |U_3 \bigcap Y| =$
$|U_3 \bigcap Z| = k-d(U_3,Z) = d(\overline{U_3},Z)$.
As, $d(Y,X) = d(Z,X)$ and $X \in P_1$,
it follows that $P_1$ is a distance-equalizer set for graph $J_{2k,k}$.
Therefore, $eqdim(J_{2k,k}) \le |P_1| = \frac{1}{2} \cdot \binom{2k}{k}$. 
\end{proof}

\subsection{Equidistant dimension of $K_{n,2}$}

Exact value for $eqdim(K_{n,2})$ is given in Theorem \ref{knes1},
and it is equal to $eqdim(J_{n,2}) = 3$.

\begin{thm} \label{knes1} $eqdim(K_{n,2}) = 3$.
\end{thm}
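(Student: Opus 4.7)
The plan is to exploit the simple relationship between $K_{n,2}$ and $J_{n,2}$ already recorded in the introduction: both graphs have diameter $2$, $K_{n,2}$ is the complement of $J_{n,2}$, and for any two distinct vertices $A, B$ we have $d_{K_{n,2}}(A,B) = 3 - d_{J_{n,2}}(A,B)$. I would prove the two inequalities $eqdim(K_{n,2}) \ge 3$ and $eqdim(K_{n,2}) \le 3$ separately, essentially piggy-backing on the work already done for $J_{n,2}$ in Theorem \ref{john2} (and Observation \ref{john2a} for $n=5$).

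For the lower bound I would apply Corollary \ref{ext1a}: since $K_{n,2}$ is $\binom{n-2}{2}$-regular on $\binom{n}{2}$ vertices, one just computes $\binom{n}{2} - \binom{n-2}{2} = 2n - 3$, which is strictly greater than $2$ for all $n \ge 5$ (the range in which $K_{n,2}$ is connected, so the notion is defined). Hence $\Delta(K_{n,2}) < |V(K_{n,2})| - 2$ and the corollary gives $eqdim(K_{n,2}) \ge 3$.

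For the upper bound I would reuse the same set $S = \{\{1,2\},\{1,3\},\{2,3\}\}$ that worked for $J_{n,2}$. The key observation is that for any $X, Y \in V(K_{n,2}) \setminus S$ and any $Z \in S$ we have $Z \neq X$ and $Z \neq Y$, so the identity $d_{K_{n,2}}(\cdot, Z) = 3 - d_{J_{n,2}}(\cdot, Z)$ applies; consequently $d_{J_{n,2}}(X,Z) = d_{J_{n,2}}(Y,Z)$ if and only if $d_{K_{n,2}}(X,Z) = d_{K_{n,2}}(Y,Z)$. Thus a distance-equalizer set for $J_{n,2}$ is automatically a distance-equalizer set for $K_{n,2}$, so Theorem \ref{john2} (for $n \ge 6$) together with Observation \ref{john2a} (for $n = 5$) yields $eqdim(K_{n,2}) \le |S| = 3$.

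The proof is therefore genuinely short; there is no real obstacle beyond being careful about the edge case $n = 5$ (the Petersen graph), where Theorem \ref{john2} does not apply but Observation \ref{john2a} supplies the needed fact, and noting that $n=4$ is excluded since $K_{4,2}$ is a disconnected perfect matching and the equidistant dimension is only defined for connected graphs in this paper.
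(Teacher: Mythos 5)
Your proposal is correct and follows essentially the same route as the paper: the lower bound via Corollary \ref{ext1a} using the regularity of $K_{n,2}$, and the upper bound by transferring the set $S=\{\{1,2\},\{1,3\},\{2,3\}\}$ from $J_{n,2}$ through the complement relation $d_{K_{n,2}}(A,B)=3-d_{J_{n,2}}(A,B)$. Your explicit handling of the $n=5$ case via Observation \ref{john2a} is in fact slightly more careful than the paper's proof, which cites only Theorem \ref{john2}.
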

\begin{proof}Step 1: $eqdim(K_{n,2}) \ge 3$\\
As stated in Section 1, Kneser graph $K_{n,k}$
exists only for $n > 2 \cdot k$ implying that
for $k=2$ all Kneser graphs $K_{n,2}$ satisfy $n \ge 5$. 
Similarly as for Johnson graphs, Kneser graph $K_{n,k}$ is $\binom{n-k}{k}$-regular graph,
so $\Delta(K_{n,k}) = \delta(K_{n,k}) = \binom{n-k}{k}$.
For $k=2$ it follows that $\Delta(K_{n,2}) =  \frac{(n-2)(n-3)}{2}$.
Since $|K_{n,2}| = \binom{n}{2} = \frac{n \cdot (n-1)}{2}$
it is obvious that for $n \ge 5$ it holds $4 \cdot n > 10$ so $(n-2)(n-3) = n^2-5n+6 < n^2-n-4 = n(n-1)-4$
implying $\binom{n-2}{2} < \binom{n}{2}-2$ which means 
$\Delta(K_{n,2}) =  \binom{n-2}{2} < |K_{n,2}|-2 = \binom{n}{2}-2$,
so by Corollary \ref{ext1a} it follows that $eqdim(K_{n,2}) \ge 3$.

Step 2: $eqdim(J_{n,2}) \le 3$\\
As already noticed $\overline{J_{n,2}} = K_{n,2}$ and $Diam(J_{n,2}) = Diam(K_{n,2}) = 2$ so $V(J_{n,2}) = V(K_{n,2})$
and for each two vertices $A,B \in V(K_{n,2})$ with $A \ne B$ it holds $d(A,B) = 3 - d_{J_{n,2}}(A,B)$,
where $d(A,B)$ and  $d_{J_{n,2}}(A,B)$ are distances between $A$ and $B$ in Kneser graph $K_{n,2}$ and
Johnson graph $J_{n,2}$, respectivelly.
 
Let $S = \{ \{1,2\}, \{1,3\}, \{2,3\}  \}$, and $X$ and $Y$ are any vertices from 
$V(K_{n,2}) \setminus S$. Since $V(J_{n,2}) = V(K_{n,2})$, and by Theorem \ref{john2}, the same set $S = \{ \{1,2\}, \{1,3\}, \{2,3\}  \}$ is proved 
to be a distance-equalizer set for graph $J_{n,2}$, then $(\forall X,Y \in V(J_{n,2}) \setminus S) (\exists Z \in S) \, d_{J_{n,2}}(X,Z) = d_{J_{n,2}}(Y,Z)$.
It follows that $d(X,Z) = 3 - d_{J_{n,2}}(X,Z) = 3 - d_{J_{n,2}}(Y,Z) = d(Y,Z)$.
Therefore, the same set $S$ is also a distance-equalizer set for graph $K_{n,2}$. 
From Step 1 and Step 2 it holds $eqdim(K_{n,2}) = 3$.
\end{proof}

\subsection{Some other individual exact values}

It is interesting to examine values of $eqdim(J_{n,k})$ and $eqdim(K_{n,k})$
in cases which are not covered by the obtained theoretical results presented above.
Table \ref{eqjk3} contains such values for Johnson and Kneser graphs up to 84 vertices
obtained by a total enumeration. Since Kneser graphs are not connected for $n=2k$,
graph $K_{8,4}$ is not connected, which is denoted by ''-''. 

\begin{table}
\caption{$eqdim(J_{n,k})$ and $eqdim(K_{n,k})$ for $k \ge 3$} \label{eqjk3}
 \small
\begin{center} 
\begin{tabular}{|c|c|c|c|}
\hline
$n$ & $k$ &  $eqdim(J_{n,k})$ &  $eqdim(K_{n,k})$\\
\hline
7 & 3 & 5  & 5 \\
\hline
8 & 3 & 8 & 3\\
8 & 4 & 7 & - \\
\hline
9 & 3 & 7 & 3 \\
 \hline
\end{tabular}
\end{center}
\end{table}

\section{Conclusions}

In this paper, equidistant dimensions of Johnson and Kneser graphs are considered.
Exact values $eqdim(J_{n,2})=3$, $eqdim(J_{2k,k})=\frac{1}{2} \cdot \binom{2k}{k}$ for odd $k$ and 
$eqdim(K_{n,2})=3$ are found.
Moreover, it is proved that $n-2$ is a tight upper bound for $eqdim(J_{n,3})$.

Further work can be directed to finding equidistant dimension of other interesting classes of graphs.
Also, it would be interesting to develop exact and/or heuristic approaches for solving 
equidistant dimension problem.

\bibliographystyle{elsarticle-num}
 \bibliography{paper}

\end{document}